\documentclass[a4paper,12pt]{amsart}
 
\usepackage{amssymb, amsmath, amsthm, amsfonts}
\usepackage{extsizes}
\usepackage{graphicx,color}    
 \usepackage{changes,soul}    

\usepackage{url} 

\newtheorem{theorem}{Theorem}[section]
\newtheorem{corollary}[theorem]{Corollary}
\newtheorem{proposition}[theorem]{Proposition}
\newtheorem{lemma}[theorem]{Lemma}

\theoremstyle{definition}
\newtheorem{definition}[theorem]{Definition}
\newtheorem{example}[theorem]{Example}

\DeclareMathOperator{\sech}{sech}

\def\r{\mathbb R}

\def\d{\mathsf d}
\def\d{\mathbb D}
\begin{document}

\title{Generalization of the catenary in the Dual plane}
 
\author{Muhittin Evren Aydın}
\address{Department of Mathematics, Faculty of Science, Firat University, Elazig,  23200 Turkey}
\email{meaydin@firat.edu.tr}
\author{Rafael L\'opez}
\address{Departamento de Geometr\'{\i}a y Topolog\'{\i}a Universidad de Granada 18071 Granada, Spain}
\email{rcamino@ugr.es}
\subjclass{53A04; 53A17; 53C42}
\keywords{Catenary, curve, dual plane}
\begin{abstract}
In this paper, we study a dual analogue of the classical catenary within the class of admissible curves in the dual plane $\d^2$. We introduce $\alpha$-catenaries in $\d^2$ as stationary points of a potential energy functional, where $\alpha\in \r$ is a real parameter. We derive the corresponding Euler-Lagrange equations and obtain explicit equations of these curves for specific values of $\alpha$. Furthermore, we establish a geometric characterization of $\alpha$-catenaries in terms of their curvature and unit normal vector field.
\end{abstract}

\maketitle
\section{Introduction and Formulation of The Problem}

The problem of describing the shape of a hanging inextensible chain suspended under gravity and fixed at its endpoints is one of the classical problems in mathematical physics. Its solution, known as the catenary, was obtained toward the end of 17th century by the independent contributions of Hooke, Leibniz, Huygens, Bernoulli, and others.

From a variational viewpoint, the standard expression of the catenary is as follows. Let $(x,y)$ be the canonical coordinates in the Euclidean plane $\r^2$ and consider a curve $y(x)$, $x\in[a,b]$, in $\r^2$. In the upper half-plane $y>0$, the shape of the chain is the curve which minimizes the gravitational potential energy among all curves connecting the given endpoints. This curve is the catenary, given by
$$
y(x)=\frac1c\cosh(cx+m), \quad c,m\in \r, c>0.
$$
Extensions of catenaries in various other ambient spaces can be found in \cite{dl1,dl2,dl3,lo1,lo2,lo3}.

In this paper, we generalize the notion of the catenary to the dual plane $\d^2$. Recall that the study of dual numbers was initiated by William Kingdon Clifford (1845--1879) \cite{clif}. A dual number is an expression of the form $a+\varepsilon b$, where $a,b\in \r$ are  called its real and dual parts, respectively, and $\varepsilon$ is a nilpotent element satisfying $\varepsilon^2=0$ with $\varepsilon\neq0$. An $n$-dimensional dual space $\d^n=\r^n \times \varepsilon \r^n$ is a module over the commutative ring of dual numbers $\d$ with unity \cite{vel}.

Let $\r^3$ be the Euclidean space. Eduard Study (1862-1930) \cite{stud} established a correspondence between lines in $\r^3$ and dual vectors in $\d^3$,  which gave rise to the systematic investigation of the kinematics of curves. This correspondence provided the algebraic basis for describing rigid-body motions and screw displacements via dual numbers, thereby initiating the systematic development of spatial kinematics.

Denote by $\langle,\rangle$ the Euclidean metric of the space $\r^n$ and by $|\cdot|$ the associated norm. We use the same notation $\langle,\rangle$ for the dual metric on $\d^n$ , obtained by extending the Euclidean metric under the condition $\varepsilon^2=0$. Let $\mathbf{u}\in \d^n$ be a dual vector of the form $\vec{u}+\varepsilon \vec{v}$, where $\vec{u},\vec{v}\in \r^n$ with $\vec{u}\neq 0$. The dual norm of $\mathbf{u}$ is defined by $|\mathbf{u}|=|\vec{u}|+\varepsilon \frac{\langle \vec{u},\vec{v}\rangle }{|\vec{u}|}$ (see \cite{vel}).

Consider a real-parametrized curve $\gamma:I\subset \r \to\d^n$ given by $\gamma(t)=\psi(t)+\varepsilon \phi (t)$, where $\psi,\phi:I\to \r^n$ are smooth curves. The derivative of $\gamma$ with respect to the real parameter $t$ is simply $\gamma'(t)=\psi'(t)+\varepsilon \phi' (t)$ (see \cite{adg,dim}). 

We call $\gamma=\gamma(t)$ {\it admissible} if its real part $\mbox{Re}(\gamma)=\psi$ is a regular curve in $\r^n$ and the velocities of the real and dual parts are orthogonal, i.e. $\langle \psi',\phi'\rangle=0$ on $I\subset \r$. For every admissible curve, it follows that $|\gamma'|=|\psi'|$ and therefore the norm $|\gamma'|$ is always real-valued.

Recently, the second author \cite{lo} has proved that a curve $\gamma(t)$ is admissible if and only if it can be reparametrized by arc-length, that is, by a real variable $s$ satisfying $|\gamma'(s)|=1$ for all $s$. Notice that in the Euclidean setting, the regularity of a curve is both necessary and sufficient condition for the existence of an arc-length reparametrization. In contrast, in the dual setting, regularity remains necessary but is no longer sufficient: a dual curve must also satisfy the orthogonality condition between the velocities of its real and dual parts.

Existence theorems for admissible curves in the dual setting were established in \cite{lo}, where a complete classification of curves with constant curvature and torsion was also obtained. See also \cite{ace}.

The aim of this paper is to extend the notion of the catenary to the class of admissible curves in
$\d^2$. For this, let $\vec{u}\in \r^2$ be a fixed unit vector and consider the half-plane 
$$\r^2_+(\vec{u})=\{p\in \r^2:\langle p,\vec{u}\rangle >0 \}.$$
Denote by $\mathbf{u}\in \d^2$ a fixed unit dual vector of the form $\mathbf{u}=\vec{u}+\varepsilon\vec{v},$  where $\vec{v}\in \r^2$. Consider a smooth admissible curve 
 $$\gamma:[a,b]\subset\r\to \d^2,\quad  \gamma(t)=\psi(t)+\varepsilon \phi (t),$$
where  $\psi:[a,b]\to \r^2_+(\vec{u})$ and $\phi:[a,b]\to \r^2$. We define the potential $\alpha$-energy of $\gamma$ in the direction of $\mathbf{u}$ as the first-order functional given by
\begin{equation}\label{endu}
\mathcal{E}[\gamma]=\int_a^b \langle \mathbf{u},\gamma(t)\rangle^\alpha |\gamma'(t)|\, dt,
\end{equation}
where $\alpha$ is a real constant. The integrand expands as
\begin{equation}\label{endu2}
\langle \mathbf{u},\gamma(t)\rangle^\alpha=\langle\vec{u},\psi(t)\rangle^\alpha +\varepsilon\alpha(\langle \vec{u},\phi(t)\rangle+\langle \vec{v},\psi(t)\rangle)\langle\vec{u},\psi(t)\rangle^{\alpha-1} .
\end{equation}
Since $\langle\vec{u},\psi(t)\rangle>0$ for all $t\in [a,b]$, the expression $\langle \mathbf{u},\gamma(t)\rangle^\alpha$ is well defined and so is the functional $\mathcal{E}[\gamma]$.

Therefore, we introduce the following definition:

\begin{definition}
Let $\alpha\in \r$. An admissible curve $\gamma:[a,b]\to\d^2$ is called an $\alpha$-catenary in $\d^2$ if it is a stationary point of the energy $\mathcal{E}[\gamma]$ given in \eqref{endu}. In particular, when $\alpha=1$, the curve $\gamma$ is called a catenary in $\d^2$.
\end{definition}

The organization of the paper is as follows. In Section \ref{sec2}, we characterize $\alpha$-catenaries in $\d^2$ by splitting the first variation of the energy into its real and dual parts (Proposition \ref{p21}). We then derive the corresponding Euler-Lagrange equations (Proposition \ref{p22}) and obtain explicit classifications for the specific cases $\alpha \in \{-1, 0, 1\}$ (Propositions \ref{p23}-\ref{p25}). We also introduce the notion of the reversed catenary and prove that it provides a natural construction of $\alpha$-catenaries in $\d^2$ directly from their Euclidean counterparts (Theorem \ref{t27}). Finally, in Section \ref{sec3}, we establish a geometric characterization of $\alpha$-catenaries in $\d^2$, relating their dual curvature to their dual normal vector (Theorem \ref{t32}), which beautifully mirrors the classical Euclidean case. 

\section{The Euler-Lagrange equation and examples}\label{sec2}

In this section, we compute the first variation of the energy defined in \eqref{endu}. For this, let $\gamma:[a,b]\to\d^2$ be an admissible curve whose real and dual parts are respectively given by
$$
\psi:[a,b]\to\r^2_+(\vec{u}), \quad \phi:[a,b]\to\r^2. 
$$ 
Denote by $\mathbf{u}=\vec{u}+\varepsilon\vec{v}$ a unit dual vector, where $|\vec{u}|=1$ and $\langle \vec{u},\vec{v}\rangle=0$. Since $\gamma$ is admissible, it follows that $|\gamma'(t)|=|\psi'(t)|$ for all $t$. 

We first show how to obtain \eqref{endu2}. On the left side of \eqref{endu2}, the term $ \langle \mathbf{u},\gamma(t)\rangle
$ expands as
$$
\langle \mathbf{u},\gamma(t)\rangle=\langle\vec{u},\psi(t)\rangle +\varepsilon(\langle \vec{u},\phi(t)\rangle+\langle \vec{v},\psi(t)\rangle).
$$  
Let $\mathbf{x}=x+\varepsilon y \in U \subset \d$ be a dual variable and $f:U\to \d$ a smooth function, $f=f(\mathbf{x})$. Then its first-order Taylor expension in the dual sense gives $f(\mathbf{x})=f(x)+\varepsilon y f'(x)$. Now, viewing $f(\mathbf{x})=\mathbf{x}^\alpha$ as a power function and setting
$$
\mathbf{x}=\langle \mathbf{u},\gamma(t)\rangle, \quad x=\langle\vec{u},\psi(t)\rangle, \quad y=\langle \vec{u},\phi(t)\rangle+\langle \vec{v},\psi(t)\rangle,
$$
we immediately obtain \eqref{endu2}.

Next, the energy \eqref{endu} can be written as
$$
\mathcal{E}[\gamma]=\mathcal{E}_0[\psi]+\varepsilon\mathcal{E}_1[\psi,\phi],
$$
where
\begin{equation} \label{endu1}
\begin{array}{l}\vspace{.5cm}
\mathcal{E}_0[\psi]=\int_a^b \langle \vec{u},\psi(t)\rangle^\alpha |\psi'(t)|dt,\\
\mathcal{E}_1[\psi,\phi]=\alpha \int_a^b (\langle \vec{u},\phi(t)\rangle+\langle \vec{v},\psi(t)\rangle)\langle\vec{u},\psi(t)\rangle^{\alpha-1} |\psi'(t)|dt.
\end{array}
\end{equation}

In the following proposition, we provide a characterization of $\alpha$-catenaries in $\d^2$.

\begin{proposition}\label{p21}
Let $\alpha\in \r$, $\alpha\neq0$. An admissible curve $\gamma:[a,b]\to\d^2$, $\gamma=\psi+\varepsilon \phi$, is an $\alpha$-catenary in $\d^2$ if and only if $ \psi$ is an $\alpha$-catenary in $\r^2$ and $\mathcal{E}_1'[\psi,\phi]=0$.
\end{proposition}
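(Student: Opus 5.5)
The plan is to split the first variation of $\mathcal{E}$ into real and dual parts, using the decomposition $\mathcal{E}[\gamma]=\mathcal{E}_0[\psi]+\varepsilon\mathcal{E}_1[\psi,\phi]$ from \eqref{endu1}. Take a variation of $\gamma$ by admissible curves with fixed endpoints, $\gamma_\tau=\psi_\tau+\varepsilon\phi_\tau$, $\tau\in(-\delta,\delta)$, with $\gamma_0=\gamma$. Since each $\gamma_\tau$ is admissible, $|\gamma_\tau'|=|\psi_\tau'|$ is real. Hence
$$\mathcal{E}[\gamma_\tau]=\mathcal{E}_0[\psi_\tau]+\varepsilon\,\mathcal{E}_1[\psi_\tau,\phi_\tau].$$
Differentiating in $\tau$ at $\tau=0$ gives
$$\mathcal{E}'[\gamma]=\mathcal{E}_0'[\psi]+\varepsilon\,\mathcal{E}_1'[\psi,\phi].$$
A dual number vanishes exactly when its real and dual parts both vanish. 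So $\gamma$ is stationary if and only if $\mathcal{E}_0'[\psi]=0$ and $\mathcal{E}_1'[\psi,\phi]=0$ for all admissible variations.

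Next, identify the real condition. $\mathcal{E}_0[\psi]=\int_a^b\langle\vec u,\psi\rangle^\alpha|\psi'|\,dt$ is exactly the Euclidean $\alpha$-energy. Its vanishing first variation (over variations of $\psi$ in $\r^2_+(\vec u)$ with fixed endpoints) is the definition of $\psi$ being an $\alpha$-catenary in $\r^2$. Two points need care here.

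\begin{itemize}
\item Every variation $\psi_\tau$ of $\psi$ must extend to an admissible variation $\gamma_\tau$. I would set $\phi_\tau=\phi\circ$ (a suitable reparametrization), or solve $\langle\psi_\tau',\phi_\tau'\rangle=0$ for $\phi_\tau$. The simplest choice is to write $\phi'=\lambda J\psi'$, with $J$ the rotation by $\pi/2$, and define $\phi_\tau'=\lambda J\psi_\tau'$. The fixed right endpoint of $\phi_\tau$ can then be restored by adding a compactly supported correction in the direction $J\psi_\tau'$.
\item Conversely, any admissible variation of $\gamma$ induces a fixed-endpoint variation of $\psi$.
\end{itemize}

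Then the condition ``$\mathcal{E}_0'[\psi]=0$ for all admissible $\gamma_\tau$'' coincides with $\psi$ being a Euclidean $\alpha$-catenary. The hypothesis $\alpha\neq0$ presumably matters because for $\alpha=0$ the dual part $\mathcal{E}_1$ vanishes identically and the statement degenerates. I would remark on this but not rely on it further.

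The main obstacle is the extension step just described: showing that admissibility is not an extra constraint on the real part. If the endpoint correction for $\phi_\tau$ fails, I would instead argue with the variation field directly. Given $V$ along $\psi$ vanishing at the endpoints, I would seek $W$ along $\phi$ vanishing at the endpoints satisfying the linearized constraint $\langle V',\phi'\rangle+\langle\psi',W'\rangle=0$. This is a first-order scalar condition on the two-component field $W'$, so the tangential part of $W'$ is determined while the normal part is free. That free normal part can be used to enforce $\int_a^b W'\,dt=0$. With the two identifications above, the equivalence follows.
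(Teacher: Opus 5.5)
Your route is the paper's: split $\mathcal{E}[\gamma_\rho]=\mathcal{E}_0[\psi_\rho]+\varepsilon\,\mathcal{E}_1[\psi_\rho,\phi_\rho]$, separate the real and dual parts of the first variation, and identify $\mathcal{E}_0'[\psi]=0$ with $\psi$ being a Euclidean $\alpha$-catenary. The paper simply quotes the Euclidean result and never asks whether admissible variations of $\gamma$ produce all variations of $\psi$. You are right to flag that extension step, but your justification breaks down when $\psi$ is a straight segment, say $\psi'=|\psi'|e$ with $e$ a fixed unit vector. Then every $J\psi'(t)$ is parallel to $Je$. So the free normal part $\int_a^b\mu J\psi'\,dt$ ranges only over a line and cannot correct the $e$-component of $\int_a^b W'\,dt$. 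Write $\phi'=\lambda J\psi'$. The linearized constraint forces $\langle W',e\rangle=-\lambda\langle V',Je\rangle$, and integrating by parts gives
\[
\langle W(b)-W(a),e\rangle=\int_a^b\lambda'\,\langle V,Je\rangle\,dt .
\]
So when $\lambda$ is non-constant, only those $V$ with $\int_a^b\lambda'\langle V,Je\rangle\,dt=0$ extend. For instance, $\psi=(t,1)$, $\phi=(0,t^2/2)$ admits only $V$ with $\int_a^b V_2\,dt=0$. The endpoint correction in your first construction fails for the same reason: at $\tau=0$ the corrections span only $Je$. In this case, $\mathcal{E}_0'[\psi]=0$ on admissible variations does not by itself show that $\psi$ is an $\alpha$-catenary.

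The repair uses the dual part, and this is where $\alpha\neq0$ actually enters; it is not just cosmetic, as you suggest. If $\psi$ is not a segment, the vectors $J\psi'(t)$ span $\r^2$ and your argument goes through. If $\psi$ is a segment with direction $e$, consider the curves $\psi+\varepsilon(\phi+\tau m\,Je)$ with $m(a)=m(b)=0$. They are admissible for every $\tau$, since $\langle\psi',m'Je\rangle=0$. For them, $\mathcal{E}_1'[\psi,\phi]=0$ reads
\[
\alpha\,\langle\vec u,Je\rangle\int_a^b m\,\langle\vec u,\psi\rangle^{\alpha-1}|\psi'|\,dt=0
\]
for all such $m$, hence $\alpha\langle\vec u,Je\rangle=0$. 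As $\alpha\neq0$, $e$ is parallel to $\vec u$. A segment parallel to $\vec u$ is an $\alpha$-catenary in $\r^2$, since $\kappa_\psi=0=\langle N_\psi,\vec u\rangle$. The ``if'' direction needs no change.
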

\begin{proof}
Let $\rho \in (-\delta,\delta)\subset \r$. Consider the smooth variation $\gamma_\rho:[a,b]\to \d^2$ given by
$$
\gamma_\rho(t)=\psi_\rho(t)+\varepsilon \phi_\rho(t), \qquad \gamma_0=\gamma.
$$
Then, we have
$$
\mathcal{E}[\gamma_\rho]
=\mathcal{E}_0[\psi_\rho]+\varepsilon \mathcal{E}_1[\psi_\rho,\phi_\rho].
$$
The first variation of $\mathcal{E}$ at $\gamma$ is defined by
$$
\mathcal{E}'[\gamma]=\left.\frac{d}{d\rho}\right|_{\rho=0}\mathcal{E}[\gamma_\rho].
$$
Since the differentiation of dual-valued functionals with respect to a real parameter is linear, we obtain
\begin{equation}\label{eq:firstvar-split}
\mathcal{E}'[\gamma]
=\mathcal{E}_0'[\psi]+\varepsilon\,\mathcal{E}_1'[\psi,\phi].
\end{equation}
Hence, if $\gamma$ is a stationary point of $\mathcal{E}[\gamma]$, then $\mathcal{E}'[\gamma]=0$, which means that $\mathcal{E}_0'[\psi]=0$ and $\mathcal{E}_1'[\psi,\phi]=0$. In other words, $\gamma(t)$ is an $\alpha$-catenary in $\d^2$ if and only if
\begin{equation}\label{eqsplit}
\mathcal{E}_0'[\psi]=0, \quad
\mathcal{E}_1'[\psi,\phi]=0.
\end{equation}
It is known that the stationary points of the energy $\mathcal{E}_0[\psi]$ correspond to the $\alpha$-catenaries in $\r^2$ (see \cite{die}). 
 
\end{proof}

Next, we derive the Euler-Lagrange equation corresponding to the energy $\mathcal{E}_1$ given in \eqref{endu1}. After a suitable change of coordinates, we may assume that a unit dual vector $\mathbf{u}$ is given by
$$\mathbf{u}=\vec{u}+\varepsilon \vec{v},\quad \vec{u}=(0,1), \vec{v}=(v,0).$$

Let $\gamma(x)\subset \d^2$, $x\in [a,b]\subset\r$, be an admissible curve written in the form 
\begin{equation}\label{g}
\begin{split}
\gamma(x)&=\psi(x)+\varepsilon \phi(x),\\
\psi(x)&=(x,y(x)),\\
\phi(x)&=(w(x),z(x)),
\end{split}
\end{equation}
where $w,y,z$ are smooth functions on the interval $[a,b]$. 

We highlight two observations. First, the function $w$ cannot be constant. Indeed, if $\phi(x)=(w_0,z(x))$, $w_0\in\r$, then the orthogonality condition between $\psi$ and $\phi$ gives $y'(x)z'(x)=0$ on $[a,b]$. If $z(x)$ were constant, then $\phi$ would reduce to a single point, which is not our case. Hence $z'(x)\neq 0$ and therefore $y'(x)=0$, which is a contradiction. 

Second, even if $w(x)$ is not constant, we cannot reparametrize $\phi$ as $\phi(x)=(x,z(x))$ because this would imply a reparametrization of the curve $\psi$, which is already fixed in \eqref{g}.
 
In terms of the coordinate functions, the orthogonality condition between the curves $\psi$ and $\phi$ is equivalent to  
\begin{equation}\label{orto}
w'+y'z'=0.
\end{equation}

If $\psi(x)$ is an $\alpha$-catenary in $\r^2$, then the function $y(x)$ satisfies
\begin{equation}\label{y11}
\frac{y''}{1+y'^2}=\frac{\alpha}{y}.
\end{equation}
Solving this differential equation yields
\begin{equation}
1+y'^2=c^2y^{2\alpha}, \label{y12}
\end{equation}
where $c>0$ is a real constant. By differentiating, one obtains 
\begin{equation}\label{y13}
y''=\alpha c^2 y^{2\alpha-1}.
\end{equation}

In the next result, we characterize  all $\alpha$-catenaries in $\d^2$.
\begin{proposition}\label{p22}
A curve $\gamma $ in $\d^2$  given by \eqref{g} is an $\alpha$-catenary if and only if $\operatorname{Re}(\gamma)=\psi$ is an $\alpha$-catenary of $\r^2$ satisfying \eqref{y11}, and  
\begin{equation}\label{eq1}
 z'' + \alpha \frac{y'}{y}(z'+v) + \alpha \frac{z+vx}{y^2} = 0.
\end{equation}
\end{proposition}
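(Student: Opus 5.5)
By Proposition \ref{p21}, $\gamma$ is an $\alpha$-catenary exactly when $\psi$ is an $\alpha$-catenary of $\r^2$, which gives \eqref{y11}, and in addition $\mathcal{E}_1'[\psi,\phi]=0$. So the plan is to write $\mathcal{E}_1$ in the coordinates \eqref{g} and compute its Euler--Lagrange equations under the admissibility constraint \eqref{orto}. With $\vec u=(0,1)$ and $\vec v=(v,0)$ we have $\langle\vec u,\phi\rangle=z$ and $\langle \vec v,\psi\rangle=vx$. Since $|\psi'|=\sqrt{1+y'^2}$, this gives
$$\mathcal{E}_1=\alpha\int_a^b (z+vx)\,y^{\alpha-1}\sqrt{1+y'^2}\,dx .$$
The function $w$ does not occur in the integrand. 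It enters only through \eqref{orto}.

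I will treat \eqref{orto} as a nonholonomic constraint and use a Lagrange multiplier $\lambda(x)$. That is, I take stationary points of
$$\int_a^b \Big[(z+vx)\,y^{\alpha-1}\sqrt{1+y'^2}+\lambda\,(w'+y'z')\Big]dx$$
with respect to independent variations of $y$, $z$ and $w$.
\begin{itemize}
\item The $w$-equation is $\lambda'=0$, so $\lambda$ is constant.
\item The $z$-equation gives $\lambda y''=\alpha y^{\alpha-1}\sqrt{1+y'^2}$ up to sign conventions. Combined with \eqref{y12} and \eqref{y13}, this fixes $\lambda$ in terms of $c$ and is consistent precisely because $\psi$ is an $\alpha$-catenary.
\item The $y$-equation contains $z''$ through the term $(\lambda z')'$. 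It also contains terms from differentiating $(z+vx)y^{\alpha-1}y'/\sqrt{1+y'^2}$ with respect to $x$.
\end{itemize}

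Next I expand the $y$-equation. I multiply through by $\sqrt{1+y'^2}/y^{\alpha-1}$ and eliminate $y''$ using $y''/(1+y'^2)=\alpha/y$. I then eliminate $1+y'^2$ and $\lambda$ using \eqref{y12} and the value of $\lambda$ obtained above. The aim is to collect the result into the form \eqref{eq1}: $z''$, plus $\alpha\frac{y'}{y}$ times $(z'+v)$, which comes from differentiating the factor $z+vx$, plus $\alpha\frac{z+vx}{y^2}$, which comes from the terms without derivatives of $z$. For the converse, every step is an equivalence once \eqref{y11} holds, so \eqref{y11} together with \eqref{eq1} gives back $\mathcal{E}_1'=0$. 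The function $w$ is then recovered from $w'=-y'z'$.

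The main obstacle is setting up the admissible variations correctly. If one varies $z$ alone, ignoring \eqref{orto}, the integrand is linear in $z$ with no $z'$. The Euler--Lagrange equation would then be $y^{\alpha-1}\sqrt{1+y'^2}=0$, which is absurd. Varying $y$ without the constraint gives only a first-order relation. So the second-order equation \eqref{eq1} must come from the interaction between the constraint and the functional. I expect the bookkeeping of the multiplier, and in particular the powers of $y$ and the factors of $c$ coming from \eqref{y12}, to be where the computation needs the most care. The normalization of $\lambda$ is what should produce the clean coefficients $\alpha$ in \eqref{eq1}. If the multiplier approach does not produce exactly these coefficients, I would fall back on directly parametrizing admissible variations. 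This means taking $\delta z=\eta$ arbitrary with compact support, $\delta y=\xi$, and $\delta w=-\int(\xi'z'+y'\eta')$. I would then substitute these into $\mathcal{E}_1'$ and integrate by parts.
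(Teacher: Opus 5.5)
Your proposal is correct and follows essentially the same route as the paper. The paper also adjoins the constraint with a multiplier, $L=\alpha(z+vx)y^{\alpha-1}\sqrt{1+y'^2}+\lambda(w'+y'z')$; the $w$-equation makes $\lambda$ constant, the $z$-equation compared with \eqref{y13} gives $\lambda=1/c$, and the $y$-equation reduces to \eqref{eq1} after using \eqref{y11} and \eqref{y12}. (Leaving the factor $\alpha$ out of your augmented integrand only rescales the multiplier to $\lambda=1/(\alpha c)$ and does not change the resulting equation.)
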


\begin{proof} 
 By Proposition \ref{p21}, the real part of $\gamma$ is an $\alpha$-catenary in $\r^2$.  We now compute the Euler-Lagrange equation for  $\mathcal{E}_1$.    The  Lagrangian   is given by 
\begin{equation*}
\begin{split}
L(x,y,w,z,y',w',z')&=\alpha (z+vx)y^{\alpha-1}\sqrt{1+y'^2}+\lambda(w'+y'z'),
\end{split}
\end{equation*}
where $\lambda=\lambda(x)$ is a Lagrange multiplier. We next calculate the corresponding Euler-Lagrange equations using standard techniques. This yields the following system:
\begin{equation} \label{eleqs}
\left\{
\begin{split}
0&=L_y-\frac{d}{dx}(L_{y'}),\\
0&=L_w-\frac{d}{dx}(L_{w'}),\\
0&=L_z-\frac{d}{dx}(L_{z'}) .
\end{split}
\right.
\end{equation}
Each equation in \eqref{eleqs} reduces, respectively, to
\begin{equation} \label{el-re-eqs}
\begin{split} 
 \frac{d}{dx}(\lambda z'+\alpha(z+vx)y^{\alpha-1}\frac{y'}{\sqrt{1+y'^2}})&=  \alpha (\alpha-1)(z+vx)y^{\alpha-2}\sqrt{1+y'^2} \\
 &=c\alpha(\alpha-1)(z+vx)y^{2\alpha-2},\\
 \frac{d}{dx}\lambda&=0,\\
 \frac{d}{dx}(\lambda y') &= \alpha y^{\alpha-1}\sqrt{1+y'^2}=\alpha c y^{2\alpha-1}.
 \end{split} \end{equation}
 
 The second equation in \eqref{el-re-eqs} implies that $\lambda$ is a constant function. The third equation is $\lambda y''=\alpha c y^{2\alpha-1}$. Comparing this  with \eqref{y13}, we deduce that $\lambda=\frac{1}{c}$. Next, using \eqref{y11}, the first equation of \eqref{el-re-eqs} becomes
 $$ \lambda z'' +\alpha(z+vx)y^{\alpha-2}\frac{1}{\sqrt{1+y'^2}} + \alpha(z'+v)y^{\alpha-1}\frac{y'}{\sqrt{1+y'^2}}= 0.$$
Finally, applying \eqref{y12} together with $\lambda=\frac{1}{c}$, we obtain \eqref{eq1}.

\end{proof}
 
We describe the $\alpha$-catenaries for the   particular cases $\alpha\in \{-1,0,1\} $, for which the ODEs \eqref{orto}, \eqref{y11} and \eqref{eq1} admit explicit solutions. In each case, our argument is as follows: we solve \eqref{y11} to obtain $y(x)$, substitute this expression and its derivative into \eqref{eq1} to determine $z(x)$, and then find $w(x)$ from \eqref{orto}. 

The case $\alpha=0$ is immediate.

\begin{proposition} \label{p23}
A curve $\gamma $ in $\d^2$     given by \eqref{g} is a $0$-catenary if and only if 
\begin{equation*}
\begin{split}
y(x)&=\pm\sqrt{c^2-1}x+m,\\
w(x)&=\mp\sqrt{c^2-1}d_1 x+d_3,\\
z(x)&= d_1x+d_2 ,
\end{split}
\end{equation*}
where $ c,m,d_i\in\r$, $c>0$.
 
\end{proposition}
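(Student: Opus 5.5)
The plan is to specialize Proposition \ref{p22} to $\alpha=0$ and integrate the three ODEs \eqref{y11}, \eqref{eq1} and \eqref{orto} in that order. There is one caveat. Proposition \ref{p21} was stated for $\alpha\neq0$, and when $\alpha=0$ the functional $\mathcal{E}_1$ in \eqref{endu1} carries the factor $\alpha$. So I will not go through Proposition \ref{p21}. Instead I will rerun the Lagrange-multiplier computation in the proof of Proposition \ref{p22}: keep the constraint term $\lambda(w'+y'z')$ and set $\alpha=0$ in \eqref{el-re-eqs}. This yields exactly the conditions \eqref{y11} (with $\alpha=0$) and \eqref{eq1} (with $\alpha=0$), which I take as the definition of the $0$-catenary condition.

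\textbf{Real part.} With $\alpha=0$, equation \eqref{y11} reads $y''=0$. Equivalently, \eqref{y12} gives $1+y'^2=c^2$, so $y'$ is a constant with $y'^2=c^2-1$. This forces $c\geq 1$; the case $c=1$ gives horizontal lines. Hence $y(x)=\pm\sqrt{c^2-1}\,x+m$. Geometrically this just says the real part is a critical point of length, that is, a straight line in the upper half-plane, written as a graph over $x$.

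\textbf{Dual part.} With $\alpha=0$, all lower-order terms in \eqref{eq1} vanish and we are left with $z''=0$, so $z(x)=d_1x+d_2$. The key point is the multiplier. In the proof of Proposition \ref{p22}, the third equation of \eqref{el-re-eqs} becomes $\lambda y''=0$ when $\alpha=0$. So instead I will use the first equation directly: it reads $\frac{d}{dx}(\lambda z')=0$, with $\lambda$ constant by the second equation. If $\lambda\neq0$ this gives $z''=0$. This is the step I expect to be the main obstacle. If $\lambda=0$, the Euler--Lagrange system is vacuous in $z$, reflecting that $\mathcal{E}_1\equiv0$. I would adopt the normalization $\lambda=1/c$ inherited from the general case, or equivalently rely on the form of \eqref{eq1} already established, so that $z''=0$.

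\textbf{Orthogonality.} Finally, \eqref{orto} gives $w'=-y'z'=\mp\sqrt{c^2-1}\,d_1$. Integrating, $w(x)=\mp\sqrt{c^2-1}\,d_1x+d_3$. For the converse, I substitute these three functions back: $y''=0$, $z''=0$, and $w'+y'z'=0$ all hold identically, so by Proposition \ref{p22} (at $\alpha=0$) the curve is a $0$-catenary. I will also note that the constraint $y>0$ on $[a,b]$ only restricts $m$ and the interval, and does not affect the formulas.
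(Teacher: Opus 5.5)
Your route is the one the paper intends: its whole justification is that the case $\alpha=0$ is ``immediate'', i.e.\ set $\alpha=0$ in \eqref{y11}, \eqref{eq1}, \eqref{orto} and integrate. Your real-part and orthogonality steps are fine. But the step you flagged is a genuine gap, the paper's substitution shares it, and it cannot be closed.

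In Proposition \ref{p22} the value $\lambda=1/c$ comes from comparing $\lambda y''=\alpha c\,y^{2\alpha-1}$ with $y''=\alpha c^2y^{2\alpha-1}$, and that comparison needs $\alpha\neq0$. So \eqref{eq1} is not ``already established'' at $\alpha=0$. You also cannot take it as the definition, because a $0$-catenary is defined as a stationary point of $\mathcal{E}$. At $\alpha=0$ the system \eqref{el-re-eqs} only says that $\lambda$ is constant with $\lambda y''=0$ and $\lambda z''=0$. The multiplier rule gives the existence of \emph{some} such $\lambda$; it is not a normalization you are free to choose. The choice $\lambda=0$ satisfies the whole system for every admissible curve, precisely because $\mathcal{E}_1\equiv0$, so no condition on $z$ follows.

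In fact the ``only if'' direction is false under the paper's definition. For $\alpha=0$ we have $\langle\mathbf u,\gamma\rangle^0=1$ and $|\gamma'|=|\psi'|$, so $\mathcal{E}[\gamma]=\int_a^b|\psi'|\,dt$ is real and depends only on $\psi$. If $\psi$ is a straight segment, every admissible variation fixing the endpoints satisfies $\mathcal{E}[\gamma_\rho]\geq\mathcal{E}[\gamma]$. Hence the first variation vanishes whatever $\phi$ is.

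For example, take $\gamma(x)=(x,x+1)+\varepsilon(-x^2,x^2)$ on $[0,1]$. It is admissible, since $w'+y'z'=-2x+2x=0$, and it is a $0$-catenary, yet $z''=2$. The correct family is $y=\pm\sqrt{c^2-1}\,x+m$, $z$ arbitrary, and $w=\mp\sqrt{c^2-1}\,z+d_3$. Only the ``if'' direction survives. It follows directly from the fact that a segment is critical for length, not from Proposition \ref{p22} at $\alpha=0$.

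The stated family is recovered only if you change the variational problem. Allow non-admissible competitors and keep the full dual norm $|\gamma'|=|\psi'|+\varepsilon\langle\psi',\phi'\rangle/|\psi'|$. Its dual part $(w'+y'z')/\nu$, with $\nu=\sqrt{1+y'^2}$, then plays the role of the multiplier term with $\lambda=1/\nu=1/c$. Varying $y$ by $\eta$ gives the dual first variation $-\frac{1}{c}\int_a^b\eta\,z''\,dx$, which forces $z''=0$.
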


 \begin{proposition} \label{p24}
A curve $\gamma $ in $\d^2$  given by \eqref{g} is a catenary if and only if 
\begin{equation}
\begin{split}\label{s1}
y(x)&=\frac{1}{c}\cosh(cx+m),\\
w(x)&=\frac{v}{c}\cosh(cx+m) + c d_1 x - d_1 \tanh(cx+m) + d_2 \sech(cx+m) + d_3,\\
z(x)&=-vx + d_1 \sech(cx+m) + d_2 \tanh(cx+m),
\end{split}
\end{equation}
where $c, m, v, d_i \in \r$ and $c>0$.
\end{proposition}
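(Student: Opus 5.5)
The plan is to follow the three-step scheme described before Proposition \ref{p23}. First solve \eqref{y11} for the real part. Then substitute into \eqref{eq1} to find $z$. Finally integrate \eqref{orto} to find $w$. By Proposition \ref{p22} these equations are necessary and sufficient, so computing their general solution proves both directions at once.

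\emph{Real part.} For $\alpha=1$, \eqref{y12} reads $1+y'^2=c^2y^2$. Its solutions with $y>0$ are $y(x)=\frac1c\cosh(cx+m)$, the classical catenary. Write $s=cx+m$. Then $y'/y=c\tanh s$ and $1/y^2=c^2\sech^2 s$.

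\emph{Dual part $z$.} The key simplification is the substitution $u=z+vx$. Since $u'=z'+v$ and $u''=z''$, equation \eqref{eq1} with $\alpha=1$ becomes the homogeneous linear equation
$$u''+\frac{y'}{y}u'+\frac{u}{y^2}=0.$$
Changing the variable to $s$ divides every term by $c^2$, which gives
$$u_{ss}+\tanh s\,u_s+\sech^2 s\,u=0.$$
I expect finding two independent solutions of this equation to be the main obstacle. The natural guesses are $\sech s$ and $\tanh s$.
\begin{itemize}
\item For $u=\sech s$: $u_s=-\sech s\tanh s$ and $u_{ss}=\sech s\tanh^2 s-\sech^3 s$. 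The three terms then cancel.
\item For $u=\tanh s$: $u_s=\sech^2 s$ and $u_{ss}=-2\sech^2 s\tanh s$. These also cancel.
\end{itemize}
The two solutions are independent; their Wronskian is $\sech s\ne 0$. Hence the general solution is $u=d_1\sech s+d_2\tanh s$, and therefore $z=-vx+d_1\sech(cx+m)+d_2\tanh(cx+m)$.

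\emph{Dual part $w$.} Equation \eqref{orto} gives $w'=-y'z'$, with $y'=\sinh s$. Differentiating the formula for $z$ gives $z'=-v-cd_1\sech s\tanh s+cd_2\sech^2 s$. Multiplying out,
$$w'=v\sinh s+cd_1\tanh^2 s-cd_2\sech s\tanh s.$$
We integrate in $x$ using $dx=ds/c$:
\begin{itemize}
\item $\int v\sinh s\,dx=\frac vc\cosh s$;
\item $\int c\tanh^2 s\,dx=s-\tanh s$, that is, $cx-\tanh s$ up to a constant;
\item $\int -c\sech s\tanh s\,dx=\sech s$.
\end{itemize}
Collecting the constants into $d_3$ yields exactly the expression for $w$ in \eqref{s1}.

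Conversely, each step above produced the full general solution of its equation. So any triple $(y,w,z)$ of the form \eqref{s1} satisfies \eqref{orto}, \eqref{y11} and \eqref{eq1}, and Proposition \ref{p22} then shows it is a catenary in $\d^2$.
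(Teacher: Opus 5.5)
Your proposal is correct and follows the paper's proof step for step: you solve for the catenary $y$, substitute $u=z+vx$ to obtain a homogeneous linear equation with general solution $d_1\sech(cx+m)+d_2\tanh(cx+m)$, and then integrate $w'=-y'z'$. You also verify the two solutions and their Wronskian and state the converse direction, which the paper leaves implicit; one small correction is that $1+y'^2=c^2y^2$ also has the positive singular solution $y\equiv 1/c$, which you should discard explicitly because it does not satisfy \eqref{y11}.
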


\begin{proof}
From Proposition \ref{p21}, it follows that the real part  $\psi(x)=(x,y(x))$ is the standard catenary in $\r^2$. Thus, $y(x) = \frac{1}{c}\cosh(cx+m)$, and consequently $y'(x) = \sinh(cx+m)$, where $c,m\in \r$, $c>0$. Substituting $y$ and $y'$ into   \eqref{eq1}, we obtain 
$$
z'' +  c \tanh(cx+m) (z'+v) + \frac{c^2(z+vx)}{\cosh^2(cx+m)} = 0.
$$
To solve this equation, we use the change of variable $u(x) = z(x) + vx$. It directly follows that $u' = z' + v$ and $u'' = z''$. Hence, the equation transforms into  
$$
u'' + c \tanh(cx+m) u' + c^2 \text{sech}^2(cx+m) u = 0.
$$
This linear equation has general solution
$$
u(x) = d_1 \text{sech}(cx+m) + d_2 \tanh(cx+m),
$$
where $d_1, d_2 \in \r$. Then,  the   expression for $z(x)$ is given by  \eqref{s1}. It remains to find $w(x)$. From the orthogonality condition \eqref{orto}, we have $w' = -y'z'$. Differentiating $z(x)$ gives
$$
z'(x) = -v - c d_1 \text{sech}(cx+m)\tanh(cx+m) + c d_2 \text{sech}^2(cx+m).
$$
Multiplying by $-y'(x) = -\sinh(cx+m)$, we write
$$
w'(x) = v \sinh(cx+m) + c d_1 \tanh^2(cx+m) - c d_2 \tanh(cx+m)\text{sech}(cx+m).
$$
By integrating, we find the expression for $w$ in \eqref{s1}. 
\end{proof}

\begin{proposition}\label{p25}
A curve $\gamma $ in $\d^2$     given by \eqref{g} is a $(-1)$-catenary if and only if 
\begin{equation}
\begin{split}\label{s2}
y(x)&=\sqrt{R^2-(x-m)^2},\\
w(x)&=(v-d_1) y(x) + d_2 (x-m) - d_2 y(x) \arcsin\left(\frac{x-m}{R}\right) + d_3,\\
z(x)&=-vx + d_1(x-m) + d_2 \left( y(x) + (x-m)\arcsin\left(\frac{x-m}{R}\right) \right),
\end{split}
\end{equation}
where $R, m, v, d_i \in \mathbb{R}$ with $R>0$. Furthermore, the maximal domain is given by $|x-m| < R$.
\end{proposition}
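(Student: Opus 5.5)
The proof follows the same scheme as Proposition \ref{p24}: solve \eqref{y11} with $\alpha=-1$, substitute into \eqref{eq1}, solve the resulting linear ODE, and recover $w$ from \eqref{orto}.

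\textbf{The real part.} By Proposition \ref{p21}, $\psi$ is a $(-1)$-catenary of $\r^2$. With $\alpha=-1$, \eqref{y12} reads $1+y'^2=c^2y^{-2}$. Setting $R=1/c$, this becomes $(yy')^2=R^2-y^2$. Integrating gives $y^2+(x-m)^2=R^2$, and since $y>0$ we get
\[
y=\sqrt{R^2-(x-m)^2},\qquad y'=-\frac{x-m}{y}.
\]
Thus $\psi$ is the upper half of a circle centred on the axis. Positivity of $y$ forces the maximal domain $|x-m|<R$.

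\textbf{The equation for $z$.} As in Proposition \ref{p24}, put $u=z+vx$, so that $u'=z'+v$ and $u''=z''$. For $\alpha=-1$, \eqref{eq1} becomes
\[
u''-\frac{y'}{y}u'-\frac{u}{y^2}=0 .
\]
Using $y'/y=-(x-m)/y^2$, we obtain
\[
y^2u''+(x-m)u'-u=0 .
\]
Write $s=x-m$, so that $y^2=R^2-s^2$ and the equation is $(R^2-s^2)u''+su'-u=0$. One checks immediately that $u=s$ is a solution. Reduction of order then gives a second, independent solution; the candidate
\[
u_2=y+s\arcsin(s/R)
\]
can be verified directly. Indeed, $u_2'=\arcsin(s/R)$ because the $y'$ term cancels against $s\cdot\frac{1}{y}$, and $u_2''=1/y$. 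Substituting, the left-hand side becomes $y+s\arcsin(s/R)-u_2=0$. Hence
\[
z=-vx+d_1(x-m)+d_2u_2 .
\]

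\textbf{The function $w$.} From \eqref{orto}, $w'=-y'z'$. Here $z'=-v+d_1+d_2\arcsin(s/R)$ and $y'=-s/y$, so
\[
w'=\frac{s}{y}\bigl(d_1-v+d_2\arcsin(s/R)\bigr).
\]
Since $(s/y)\,ds=-dy$, the first part integrates to $(v-d_1)y$. For the second part, integrate by parts:
\[
\int \frac{s}{y}\arcsin(s/R)\,ds=-y\arcsin(s/R)+\int y\cdot\frac{1}{y}\,ds=-y\arcsin(s/R)+s .
\]
This produces exactly the formula for $w$ in \eqref{s2}.

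\textbf{Converse and main obstacle.} The converse holds because every step is an equivalence, by Proposition \ref{p22}. The only nonroutine step is finding the second solution $u_2$ of the linear ODE. The plan there is reduction of order: set $u=s\,g$, which yields the first-order equation $g''/g'=-2/s+s/(R^2-s^2)$. Integrating gives $g'=1/(s^2 y)$, and hence $g=-y/(R^2s)$. This produces the solution $-y/R^2$, which is not linearly independent from the one found above. I would therefore check the computation directly and expect it to work in the form $u_2=y+s\arcsin(s/R)$ as verified above.
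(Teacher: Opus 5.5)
Your proof is correct and follows the paper's argument step for step: $y$ is a circular arc, the substitution $u=z+vx$ gives $(R^2-s^2)u''+su'-u=0$ with solutions $s$ and $u_2=y+s\arcsin(s/R)$, and $w$ follows by integration by parts. Your direct check $u_2'=\arcsin(s/R)$, $u_2''=1/y$ is what carries the argument, and the Wronskian $s\,u_2'-u_2=-y\neq 0$ supplies the independence of $s$ and $u_2$, which you did not check explicitly.

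There are two slips to fix. First, from $1+y'^2=c^2y^{-2}$ one gets $R=c$, not $1/c$. Second, your closing reduction-of-order aside has a sign error: the correct equation is $g''/g'=-2/s-s/(R^2-s^2)$. This gives $g'=y/s^2$, $g=-y/s-\arcsin(s/R)$ and $u=sg=-u_2$, which is exactly the solution the paper obtains by reduction of order. Your $-y/R^2$ is not a solution at all, and it is not proportional to $s$ either.
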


\begin{proof}
For $\alpha=-1$, the   curve $\psi$ is a $(-1)$-catenary in $\r^2$, which corresponds to a circular arc. Thus, we have $y(x) = \sqrt{R^2-(x-m)^2}$, and its derivative is $y'(x) = \frac{-(x-m)}{y(x)}$, where $m,R\in \r$, $R>0$. Substituting $\alpha=-1$ into \eqref{eq1}, we have
$$
z'' - \frac{y'}{y}(z'+v) - \frac{z+vx}{y^2} = 0.
$$
We introduce the change of variable $u(x)=z(x)+vx$, obtaining
$$
u'' - \frac{y'}{y} u' - \frac{1}{y^2} u = 0.
$$
Substituting $y$ and $y'$ into this equation yields
$$
u'' + \frac{x-m}{R^2-(x-m)^2} u' - \frac{1}{R^2-(x-m)^2} u = 0,
$$
or equivalently,
$$
(R^2-(x-m)^2) u'' + (x-m) u' - u = 0.
$$
It is straightforward to verify that $u_1(x) = x-m$ is a particular solution to this equation. A second, linearly independent solution can be found by using reduction of order, yielding 
$$u_2(x) = \sqrt{R^2-(x-m)^2} + (x-m)\arcsin\left(\frac{x-m}{R}\right).$$
 Noting that the first term of $u_2$ is exactly $y(x)$, the general solution for $u(x)$ is
$$
u(x) = d_1 (x-m) + d_2 \left( y(x) + (x-m)\arcsin\left(\frac{x-m}{R}\right) \right),
$$
where $d_1, d_2 \in \mathbb{R}$. Reverting the change of variable, we obtain the expression for $z(x)$ in  \eqref{s2}.

Finally, we need to find $w$ from \eqref{orto}. Differentiating $z(x)$ gives
$$
z'(x) = -v + d_1 + d_2 \arcsin\left(\frac{x-m}{R}\right).
$$
Hence
$$w'(x) = \frac{x-m}{\sqrt{R^2-(x-m)^2}} \left( -v + d_1 + d_2 \arcsin\left(\frac{x-m}{R}\right) \right).$$
An integration yields the expression for $w(x)$ in \eqref{s2}.   This completes the proof.
\end{proof}

Observe that for these three values of $\alpha$, the coordinate function $y(x)$ of $\mbox{Re}(\gamma)$ also appears in the coordinates of dual part of $\gamma$. In the following example, we illustrate this particularity clearly.

\begin{example} For $\alpha=1$ and $\alpha=-1$,  choose   the integration constants in \eqref{s1} and \eqref{s2} as $d_1=d_2=d_3=0$. Then,  the dual part of the curve $\gamma$ simplifies   to $\phi(x) =  v(y(x), -x)$.  Thus, the $\alpha$-catenary $\gamma$ takes the   form
$$\gamma(x) = (x, y(x)) + \varepsilon v (y(x), -x),$$
where $y(x) = \frac{1}{c}\cosh(cx+m)$ ($\alpha=1$) or $y(x) = \sqrt{R^2-(x-m)^2}$ ($\alpha=-1$). 
\end{example}

Thanks to these examples, we can extend this result for any $\alpha$ as follows.

\begin{theorem} \label{t27}
Let $\alpha\in\r$. If $\psi(x) = (x, y(x))$ is an $\alpha$-catenary in $\r^2$, then 
\begin{equation}\label{reverse}
\gamma(x) = (x, y(x)) + \varepsilon v (y(x), -x), \quad v\in \r,
\end{equation}
is an $\alpha$-catenary in $\d^2$. 
\end{theorem}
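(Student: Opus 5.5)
By Proposition \ref{p22}, it suffices to check three things: that $\gamma$ in \eqref{reverse} is admissible, that its real part is an $\alpha$-catenary of $\r^2$, and that its dual part solves \eqref{eq1}. Here $w(x)=vy(x)$ and $z(x)=-vx$. The real part $\psi(x)=(x,y(x))$ is an $\alpha$-catenary of $\r^2$ by hypothesis, so \eqref{y11} holds. (For $\alpha=0$ the real part is a straight line, and the argument below still applies; note also that Proposition \ref{p21} was stated for $\alpha\neq 0$, but the Euler–Lagrange derivation in Proposition \ref{p22} does not use this.)

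Next comes admissibility, i.e.\ the orthogonality condition \eqref{orto}. We have $w'=vy'$ and $z'=-v$, so $w'+y'z'=vy'-vy'=0$. Hence $\gamma$ is admissible, and $\psi$ is regular because it is a graph.

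The key step is \eqref{eq1}. With $z=-vx$, we get $z''=0$, $z'+v=0$ and $z+vx=0$. Every term of \eqref{eq1} therefore vanishes identically, whatever $y$ and $\alpha$ are. Equivalently, in the variable $u=z+vx$ introduced in the proofs of Propositions \ref{p24} and \ref{p25}, the equation \eqref{eq1} is a homogeneous linear ODE in $u$ with no source term involving $v$, and $\gamma$ corresponds to the trivial solution $u\equiv 0$. The only delicate point is therefore whether Proposition \ref{p22} really applies as an "if and only if" in full generality. Its derivation uses the Lagrange multiplier $\lambda=1/c$ and the relations \eqref{y12}–\eqref{y13}, which hold for every $\alpha$-catenary $y$. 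Granting this, $\gamma$ satisfies all the Euler–Lagrange equations, so $\mathcal{E}_1'[\psi,\phi]=0$. Together with $\mathcal{E}_0'[\psi]=0$, this shows via Proposition \ref{p21} that $\gamma$ is an $\alpha$-catenary in $\d^2$.

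To avoid relying on the multiplier formalism, I would also give a direct check. In $\mathcal{E}_1$, the integrand factor $\langle\vec u,\phi\rangle+\langle\vec v,\psi\rangle=z+vx$ vanishes along $\gamma$, and it is linear in $(\phi,\psi)$. Differentiating under the variation, the first variation of $\mathcal{E}_1$ at $\gamma$ reduces to $\alpha\int (\delta z+v\,\delta x)\,y^{\alpha-1}\sqrt{1+y'^2}\,dx$. This is where the constraint from admissibility enters: the variation must respect the orthogonality condition, and it is enforced through the multiplier term $\lambda(w'+y'z')$, as in Proposition \ref{p22}. I would therefore still rely on Proposition \ref{p22} as the main route.
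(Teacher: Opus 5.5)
This is the paper's proof: with $w=vy$ and $z=-vx$ you verify \eqref{orto} and observe that $z''$, $z'+v$ and $z+vx$ all vanish, so \eqref{eq1} holds and Proposition \ref{p22} applies. Your unfinished multiplier-free check also closes. With fixed endpoints, the linearized constraint forces $\int_a^b y''\,\delta z\,dx=0$, while by \eqref{y12}--\eqref{y13} the first variation of $\mathcal{E}_1$ equals $\frac{1}{c}\int_a^b y''\,\delta z\,dx$; for $\alpha=0$, where fixing $\lambda=1/c$ in Proposition \ref{p22} does use $\alpha\neq 0$, one has $\mathcal{E}_1\equiv 0$ anyway.
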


 \begin{proof}
 The result is straightforward. Indeed, we have $w(x)=v y(x)$ and $z(x)=-vx$, $v\in \r$. The orthogonality condition \eqref{orto} is immediately satisfied. Equation \eqref{eq1} for $z$ is also easily verified.

 \end{proof}

This theorem shows a particular way to construct $\alpha$-catenaries in $\d^2$ by using only $\alpha$-catenaries of $\r^2$. 
We can interpret this behavior by saying that the dual part is generated by swapping and scaling the real part. 

\begin{definition} If $\psi(x)$ is an $\alpha$-catenary in $\r^2$,   the $\alpha$-catenary in $\d^2$ defined by \eqref{reverse} is called the reversed catenary.
\end{definition}
The existence of the reversed catenary reflects that there is a certain of symmetry in  the structure of $\alpha$-catenaries in $\d^2$.

\section{A characterization of catenaries in $\d^2$} \label{sec3}

In this section, we give a characterization of catenaries in the dual plane in terms of the curvature and the normal vector of the curve. Our approach is motivated by the corresponding characterization in the Euclidean plane. Let $\psi$ be a curve in $\r^2$. Then $\psi$ is an $\alpha$-catenary in $\r^2$ if and only if its curvature $\kappa_\psi$ satisfies
\begin{equation}
\kappa_\psi=\alpha \frac{\langle N_\psi,\vec{u} \rangle}{\langle \psi,\vec{u}\rangle}, \quad \vec{u}=(0,1),\label{cpsi}
\end{equation}
where $N_\psi$ is the unit normal vector   of   $\psi$. 

We now obtain an analogous characterization for  $\alpha$-catenaries in the dual plane $\d^2$. First, we compute the curvature $\kappa_\gamma$ of a curve $\gamma$ in $\d^2$ given by \eqref{g}. The curvature of admissible curves parametrized by arc-length was introduced in  \cite{lo}.
 
Let $\gamma(s)=\psi(s)+\varepsilon\phi(s)$ be a curve in $\d^2$ parametrized by arc-length. Denote by $\kappa_\psi$ and $N_\psi$ the curvature and the unit normal vector of $\psi$, respectively. Then
\begin{equation}\label{cu0}
\kappa_\gamma=\kappa_\psi+\varepsilon\langle\phi''_s,N_\psi\rangle, \quad \phi''_s:=\frac{d^2\phi}{ds^2},
\end{equation}
where $\kappa_\gamma$ is the curvature of $\gamma$. Using this identity, we derive a corresponding expression for $\kappa_\gamma$ when $\gamma$ is parametrized as in \eqref{g}.

\begin{lemma} Let $\gamma=\gamma(x)$ be an admissible curve in $\d^2$ parametrized by \eqref{g}. Then its curvature $\kappa_\gamma$ is 
\begin{equation}\label{cu}
\begin{split}
\kappa_\gamma&=\kappa_\psi + \varepsilon \frac{z''}{\sqrt{1+y'^2}}\\
&=\frac{y''}{(1+y'^2)^{3/2}}+\varepsilon \frac{z''}{\sqrt{1+y'^2}}.
\end{split}
\end{equation}
\end{lemma}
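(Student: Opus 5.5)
Starting from the arc-length formula \eqref{cu0}, I change the parameter from $s$ to $x$. Since $\gamma$ is admissible, $|\gamma'(x)|=|\psi'(x)|=\sqrt{1+y'^2}$, so $ds/dx=\sqrt{1+y'^2}$. The real part $\kappa_\psi=y''/(1+y'^2)^{3/2}$ is the usual graph curvature, which gives the second line of \eqref{cu} once the dual part is found. It remains to show that
$$\langle \phi''_s,N_\psi\rangle=\frac{z''}{\sqrt{1+y'^2}}.$$

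By the chain rule,
$$\phi'_s=\frac{\phi'}{\sqrt{1+y'^2}},\qquad \phi''_s=\frac{\phi''}{1+y'^2}-\frac{y'y''}{(1+y'^2)^2}\,\phi'.$$
The unit normal is $N_\psi=(-y',1)/\sqrt{1+y'^2}$; its sign is fixed so that $\kappa_\psi=y''/(1+y'^2)^{3/2}$ agrees with \eqref{cpsi}. Since $\phi=(w,z)$, we have $\langle\phi',(-y',1)\rangle=z'-y'w'$ and $\langle\phi'',(-y',1)\rangle=z''-y'w''$.

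Next I use the orthogonality condition \eqref{orto}, $w'=-y'z'$, and its derivative $w''=-y''z'-y'z''$. These give
$$z'-y'w'=(1+y'^2)z',\qquad z''-y'w''=(1+y'^2)z''+y'y''z'.$$
Substituting,
$$\langle\phi''_s,N_\psi\rangle=\frac{(1+y'^2)z''+y'y''z'}{(1+y'^2)^{3/2}}-\frac{y'y''(1+y'^2)z'}{(1+y'^2)^{5/2}}.$$
The $z'$ terms cancel, leaving $z''/\sqrt{1+y'^2}$, which is \eqref{cu}.

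The main obstacle is this last cancellation. It depends on using both \eqref{orto} and its derivative, and on keeping the tangential term of $\phi''_s$, which comes from the variable speed. I would check the sign convention for $N_\psi$ against \eqref{cpsi}, so that the real and dual parts use the same normal and the formula is consistent with the later characterization.
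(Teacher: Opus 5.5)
Your proposal is correct and follows essentially the same route as the paper's proof. Both arguments apply the chain rule to get $\phi''_s$ including the tangential term coming from $f''_s=-y'y''/(1+y'^2)^2$, use \eqref{orto} and its derivative $w''=-y''z'-y'z''$, and finish with the same cancellation of the $z'$ terms. The only cosmetic difference is the order of steps: the paper uses \eqref{orto} first to write $\phi'=z'\sqrt{1+y'^2}\,N_\psi$ before pairing with $N_\psi$, whereas you pair with $(-y',1)$ first and then substitute.
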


\begin{proof}
We reparametrize $\gamma$ by arc-length (see \cite[Proposition 2.1]{lo}) and write
$$\gamma(s)=\gamma(f(s))=\psi(f(s))+\varepsilon \phi(f(s)).$$
Notice that the real part of $\kappa_\gamma$ in \eqref{cu0}, i.e. $\mbox{Re}(\kappa_\gamma)=\kappa_\psi $, does not depend on the choice of parametrization. Hence, it remains to determine the dual part of $\kappa_\gamma$. 

As two parameters, $s$ and $x$, are now involved,  we specify the notation for differentiation as follows:
$$
\gamma'=\frac{d\gamma}{dx}, \quad \gamma'_s=\frac{d\gamma}{ds}.
$$ 
Since $\gamma$ is parametrized by arc-length and 
$$\gamma'_s=f'_s(\psi'+\varepsilon \phi'),$$
it follows that
$$f'_s=\frac{1}{\nu},\quad \nu:=\sqrt{1+y'^2}.$$
In addition, the unit normal vector field of $\psi$ is 
$$N_\psi=\frac{(-y',1)}{\nu}.$$
By using \eqref{orto} and applying the chain rule, we compute
\begin{equation*}
\begin{split}
\phi''_s&=f''_s\phi'+f'^2_s\phi''=f''_s(w',z')+f'^2_s(w'',z'')\\
&=z'f''_s(-y',1)+f'^2_s(w'',z'')\\
&=z'f''_s\nu N_\psi+\frac{1}{\nu^2}(w'',z'').
\end{split}
\end{equation*}
Taking the inner product with $N_\psi$, we obtain
\begin{equation}\label{inner}
\langle\phi''_s,N_\psi\rangle=z'f''_s\nu+\frac{1}{\nu^3}(z''-y'w'').
\end{equation}
Differentiating the orthogonality condition \eqref{orto}, we have $w''=-y''z'-y'z''$. Moreover, 
$$f''_s=\left(\frac{1}{\nu}\right)'f'_s=-\frac{y'y''}{\nu^4}.$$
Substituting these expressions into \eqref{inner} yields
$$\langle\phi''_s,N_\psi\rangle=-\frac{z'y'y''}{\nu^3}+\frac{1}{\nu^3}(z''\nu^2 +z'y'y'')=\frac{z''}{\nu}.$$
This completes the proof.
\end{proof}

We now establish the dual version of \eqref{cpsi}. 

\begin{theorem}\label{t32}
Let $\gamma(x)=\psi(x)+\varepsilon \phi(x)$ be an admissible curve in $\d^2$. Then $\gamma$ is an $\alpha$-catenary if and only if
its curvature $\kappa_\gamma$ satisfies
\begin{equation}\label{chara}
  \kappa_\gamma=\alpha \frac{\langle N_\gamma,\mathbf{u}\rangle}{\langle \gamma,\mathbf{u}\rangle},
\end{equation}
where $N_\gamma$ is the unit normal vector of $\gamma$ and $\mathbf{u}\in \d^2$ is a fixed unit vector.
\end{theorem}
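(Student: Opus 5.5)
The plan is to reduce \eqref{chara} to the two equations \eqref{y11} and \eqref{eq1} and then invoke Proposition \ref{p22}. As in Section \ref{sec2}, after a change of coordinates I take $\mathbf{u}=\vec u+\varepsilon\vec v$ with $\vec u=(0,1)$ and $\vec v=(v,0)$, and write $\gamma$ in the form \eqref{g}. The real part of \eqref{chara} will give \eqref{y11}, and the dual part will give \eqref{eq1}. Both sides of \eqref{chara} are dual numbers, so the identity holds if and only if the real parts agree and the dual parts agree.

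\textbf{Step 1: the unit normal.} I first compute $N_\gamma$. Using the arc-length reparametrization from the proof of the preceding Lemma, the unit tangent is $T_\gamma=\gamma'_s=\frac{1}{\nu}\big((1,y')+\varepsilon(w',z')\big)$ with $\nu=\sqrt{1+y'^2}$. I define $N_\gamma$ as the rotation of $T_\gamma$ by $\pi/2$, applied componentwise to the real and dual parts, which matches the convention $N_\psi=(-y',1)/\nu$ used in the Lemma. This gives
$$N_\gamma=\frac{1}{\nu}\big((-y',1)+\varepsilon(-z',w')\big).$$
I will check that $\langle N_\gamma,N_\gamma\rangle=1$ and $\langle N_\gamma,T_\gamma\rangle=0$ in the dual sense. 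Both checks reduce to the orthogonality condition \eqref{orto}. This step, fixing a normal consistent with the definition of curvature in \cite{lo} and with \eqref{cu0}, is where I expect the main care to be needed, since the signs must agree with those in \eqref{cu}.

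\textbf{Step 2: expand the right-hand side.} Using \eqref{orto} to replace $w'$ by $-y'z'$, I get
$$\langle N_\gamma,\mathbf{u}\rangle=\frac{1}{\nu}\big(1-\varepsilon\, y'(z'+v)\big),\qquad \langle\gamma,\mathbf{u}\rangle=y+\varepsilon(z+vx).$$
Inverting the dual number $\langle\gamma,\mathbf{u}\rangle$ (possible since $y>0$) by $(a+\varepsilon b)^{-1}=a^{-1}-\varepsilon b a^{-2}$ then gives
$$\alpha\frac{\langle N_\gamma,\mathbf{u}\rangle}{\langle \gamma,\mathbf{u}\rangle}=\frac{\alpha}{\nu y}-\varepsilon\frac{\alpha}{\nu y}\Big(y'(z'+v)+\frac{z+vx}{y}\Big).$$

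\textbf{Step 3: compare with the curvature.} I compare this with \eqref{cu}.
\begin{itemize}
\item The real parts agree if and only if $y''/\nu^3=\alpha/(\nu y)$, which is exactly \eqref{y11}. Equivalently, by \eqref{cpsi}, $\psi$ is an $\alpha$-catenary in $\r^2$.
\item The dual parts agree if and only if $z''/\nu$ equals the dual part above. Multiplying through by $\nu$, this is precisely \eqref{eq1}.
\end{itemize}
Proposition \ref{p22} then yields the equivalence with $\gamma$ being an $\alpha$-catenary. The case $\alpha=0$ is consistent with this: both sides reduce to $y''=z''=0$, in agreement with Proposition \ref{p23}.
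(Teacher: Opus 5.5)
Your proposal is correct and follows the paper's proof essentially step by step. It uses the same normalization $\mathbf{u}=(0,1)+\varepsilon(v,0)$, the same normal $N_\gamma=N_\psi-\varepsilon z'T_\psi$ (your componentwise rotation agrees with it once \eqref{orto} is applied), the same dual division, and the same comparison of real and dual parts with \eqref{cu}, \eqref{y11} and \eqref{eq1} before invoking Proposition~\ref{p22}. Your explicit separation into real and dual parts has one small advantage: it makes both directions of the equivalence visible, whereas the paper writes only the forward chain of equalities.
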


\begin{proof}
After a change of coordinates, we may assume that $\mathbf{u}=(0,1)+\varepsilon(v,0)$, where $v\in \r$. With this choice of coordinates, we consider the parametrization  \eqref{g} of $\gamma$. A direct computation gives
$$\langle \gamma,\mathbf u\rangle = y +\varepsilon(vx+z).$$
For the computation of $N_\gamma$, we can use \cite[Proposition 2.8]{lo}, or proceed directly as follows. Let $T_\gamma$ and $T_\psi$ be the unit tangent vectors of $\gamma$ and $\psi$, respectively. By the previous lemma, 
$$T_\gamma=\gamma'_s=f'_s(\psi'+\varepsilon\phi')=T_\psi+\varepsilon z' N_\psi.$$
Therefore, the unit normal vector field of $\gamma$ is
$$N_\gamma=  N_\psi - \varepsilon z' T_\psi,$$
where
$$N_\psi= \frac{1}{\nu}(-y', 1),\quad T_\psi=\frac{1}{\nu}(1,y').  $$
The numerator on the right hand-side of \eqref{chara} is 
\begin{equation*}
\begin{split}
\langle N_\gamma,\mathbf{u}\rangle&=\langle N_\psi,(0,1)\rangle+\varepsilon\left(\langle N_\psi,(v,0)-z'\langle T_\psi,(0,1)\rangle\right)\\
&= \frac{1}{\nu}\left(1 - \varepsilon y'(v+z')\right).
\end{split}
\end{equation*}
 We now use the division operation for dual numbers,
 $$\frac{a+b\varepsilon}{c+d\varepsilon}=\frac{a}{c}+\frac{bc-ad}{c^2}\varepsilon.$$
Moreover, identity \eqref{cpsi} takes the form
 $$\kappa_\psi=\frac{\alpha}{y\nu}.$$
Using \eqref{eq1}, we rewrite
$$z''=-\frac{\alpha}{y^2}(yy'(v+z')+ vx+z).$$
Therefore,
\begin{equation*}
\begin{split}
 \alpha\frac{\langle N_\gamma,\mathbf u\rangle}{\langle \gamma,\mathbf u\rangle} &=\frac{\alpha}{y\nu}\left(
 1-\frac{\varepsilon}{y}(yy'(v+z')+vx+z)\right)\\
 &=\frac{\alpha}{y\nu}-\varepsilon\frac{\alpha}{y^2\nu}(yy'(v+z')+vx+z)\\
 &=\kappa_\psi+\varepsilon\frac{z''}{\nu}=\kappa_\gamma,
 \end{split}
 \end{equation*}
 where the last identity follows exactly from \eqref{cu}. 
\end{proof}
 
As a conseqeunce, we have the following.

\begin{corollary}
The curvature of an $\alpha$-catenary in $\r^2$ coincides with the curvature of its reversed catenary in $\d^2$. In particular, the curvature of a reversed catenary is pure real-valued. 
\end{corollary}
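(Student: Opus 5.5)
The idea is to read off the curvature from the formula \eqref{cu} of the preceding lemma. For the reversed catenary the dual part of $\gamma$ has second component $z(x)=-vx$, so its dual curvature vanishes.

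Let $\psi(x)=(x,y(x))$ be an $\alpha$-catenary in $\r^2$, and let $\gamma$ be its reversed catenary \eqref{reverse}. Theorem \ref{t27} tells us that $\gamma$ is an admissible $\alpha$-catenary in $\d^2$, written in the form \eqref{g} with
$$w(x)=vy(x),\qquad z(x)=-vx.$$
Admissibility is just \eqref{orto}, which holds because $w'+y'z'=vy'-vy'=0$. So we may apply the lemma giving \eqref{cu}, which yields
$$\kappa_\gamma=\kappa_\psi+\varepsilon\frac{z''}{\sqrt{1+y'^2}}.$$
Since $z$ is linear in $x$, we have $z''=0$. Hence $\kappa_\gamma=\kappa_\psi$, which is the curvature of the original $\alpha$-catenary in $\r^2$. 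In particular, $\kappa_\gamma$ is purely real.

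As a cross-check, the same conclusion follows from Theorem \ref{t32}. Put $z=-vx$ into \eqref{chara}. Then $vx+z=0$ and $v+z'=0$, so $\langle\gamma,\mathbf u\rangle=y$ and $\langle N_\gamma,\mathbf u\rangle=1/\nu$, where $\nu=\sqrt{1+y'^2}$. The right-hand side of \eqref{chara} therefore becomes $\alpha/(y\nu)$, which equals $\kappa_\psi$ by \eqref{cpsi}.

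There is no real obstacle here. The only point needing care is that the lemma is stated for the parametrization \eqref{g} with the normalization $\mathbf u=(0,1)+\varepsilon(v,0)$. The $v$ appearing in \eqref{reverse} is the same constant, so the hypotheses of the lemma are met directly.
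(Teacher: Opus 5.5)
Your proof is correct and is exactly the paper's argument: for the reversed catenary $z(x)=-vx$, so $z''=0$, and \eqref{cu} gives $\kappa_\gamma=\kappa_\psi$. Your cross-check via Theorem \ref{t32} is also valid. The caveat in your last paragraph is unnecessary, since the curvature formula \eqref{cu} does not involve $\mathbf u$ at all.
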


\begin{proof}
By the definition of the reversed catenary $\gamma$ given in \eqref{reverse}, we have $z(x) = -vx$. Thus, $z''=0$. Substituting this into \eqref{cu}, we obtain $\kappa_\gamma=\kappa_\psi$, completing the proof.
\end{proof}


\subsection*{Data Availability Statement}
No data are available for this study.

  \section*{Acknowledgements}
Rafael L\'opez  has been partially supported by MINECO/MICINN/FEDER grant no. PID2023-150727NB-I00,  and by the ``Mar\'{\i}a de Maeztu'' Excellence Unit IMAG, reference CEX2020-001105- M, funded by MCINN/AEI/10.13039/ 501100011033/ CEX2020-001105-M.


\end{document}